\definecolor{webgreen}{rgb}{0,.5,0}
\definecolor{webbrown}{rgb}{.6,0,0}
\def\thm@space@setup{%
  \thm@preskip=\parskip \thm@postskip=0pt
}
\newtheorem{theorem}{Theorem}[section]
\newtheorem{proposition}[theorem]{Proposition}
\newtheorem{lemma}[theorem]{Lemma}
\newtheorem{corollary}[theorem]{Corollary}
\theoremstyle{definition}
\newcommand{\setof}[2]{\{#1:#2\}}
\newcommand{\seqnum}[1]{\href{http://oeis.org/#1}{\underline{#1}}}
\newcommand{\C}{\mathbbm{C}}
\newcommand{\R}{\mathbbm{R}}
\newcommand{\Z}{\mathbbm{Z}}
\renewcommand{\l}{\lambda}
\newcommand{\mat}[1]{\text{Mat}_{#1,#1}(\Z_{\geq 0})}
\newcommand{\mats}[3]{\text{Mat}_{#1,#2}(#3)}
\begin{document}
\begin{center}
\vskip 1cm{\LARGE\bf
Counting quasi-idempotent irreducible \\  
\vskip .11in
integral matrices
}
\vskip 1cm
\large
E.~Th\"{o}rnblad \\
Department of Mathematics \\
Uppsala University \\
Box 480\\
751 06 UPPSALA \\
Sweden \\
\href{mailto:erik.thornblad@math.uu.se}{\tt erik.thornblad@math.uu.se}\\
\vskip 1cm
\large
J.~Zimmermann \\
Department of Mathematics \\
Uppsala University \\
Box 480\\
751 06 UPPSALA \\
Sweden \\
\href{mailto:jakob.zimmermann@math.uu.se}{\tt jakob.zimmermann@math.uu.se}\\
\vskip 1cm
\end{center}

\begin{abstract}
Given any polynomial $p$ in $C[X]$, we show that the set of irreducible matrices satisfying $p(A)=0$ is finite. 
In the specific case $p(X)=X^2-nX$, we count the number of irreducible matrices in this set and analyze the arising sequences and their asymptotics. 
Such matrices turn out to be related to generalized compositions and generalized partitions.
\end{abstract}

\section{Introduction}
In this paper we study the finiteness of the set of irreducible matrices which are annihilated by a given polynomial. 
This seems to be a classical problem in the spectral theory of integral matrices; however, we have not been able to find an 
answer to this question in the literature. We answer this question by the following theorem.

{\bf Theorem A.}
{\it
 For any polynomial $p \in \C[X]$, the set of irreducible integer matrices $A$ such that $p(A) = 0$ is finite.
}

A related question was studied by Eskin, Mozes and Shah \cite{EMS96}, who showed that the set of integral matrices with a given characteristic polynomial is, 
in general, not finite. All matrices in such a set are necessarily of the same dimension, something which is not true in our setting. Obviously their result 
implies that the set of integral matrices $A$ satisfying $p(A)=0$ is infinite for some polynomials $p$ (namely, the characteristic polynomials), so we cannot 
expect Theorem A to be true for arbitrary matrices. As an example, note that the polynomial
$X-1$ annihilates the identity matrix of any dimension, so the above theorem is false for any class containing all identity matrices.

The motivation to study these questions comes from higher representation theory. 
The more general finiteness problem is motivated by a technique which is used when trying to understand 
certain $2$-representations of finitary $2$-categories, see \cite{MM1}. 
The main idea is that there exists an element whose action is given by 
a non-negative, irreducible integral matrix which has to be annihilated by a certain polynomial. With this 
information one then tries to find all possible such matrices. In small cases, this can be done by 
hand, but a question that occurs quite naturally is whether or not this always is possible, i.e.\ if 
there are always only finitely many such matrices, which is why we study this question here. For more details we refer the interested reader to 
\cite{KMMZ16, MaMa16, MM5, MZ, Zi16}.

Given that there is a finite number of irreducible integral matrices annihilated by a polynomial $p$, a natural next question is whether the cardinality of this set can be determined. 
For arbitrary polynomials, this seems difficult. However, for the polynomials $f_n=X^2-nX$, this can be done. In fact, we count these matrices matrices in two different ways. 
First, we simply count all of them, which turns out to be equivalent to counting the number of generalized compositions. Secondly, we count these matrices up to permutation of 
their basis vectors. That is, we identify two $k\times k$--matrices $A,B$ if there exists a $k\times k$--permutation matrix $P$ such that conjugation of $A$ by $P$ yields $B$. 
This second case is more relevant to the problems which motivated this paper. There we know that there exists a basis such that the action is given by irreducible, non-negative
matrices. However, the order of the basis vectors does not affect the problem; in other words we do not care about it and identify matrices which can be obtained from each other by 
permutation of basis vectors. A closed formula for the number of such matrices (in either of the two cases) is not attainable, but we determine the asymptotics of 
these sequences and show that they are in bijection to classes of generalized compositions and generalized partitions.

Our interest in the polynomial $f_n=X^2-X$ stems from the following observation. Let $A$ be a finite-dimensional 
$\C$-algebra of dimension $n$. Then $F := A \otimes_{\C} A$ is an $A$-$A$-bimodule. It acts on the category of 
$A$-$A$-bimodules from the left by taking tensor products over $A$, i.e.\ for a $A$-$A$-bimodule $M$, the action 
of $F$ is given by $F(M) = A \otimes_{\C} A \otimes_{A} M$. Now, we can observe that 
\[
 F^2 = F \circ F = A \otimes_{\C} A \otimes_{A} A \otimes_{\C} A \cong (A \otimes_{\C} A)^{\oplus n} 
 = F^{\oplus n}.
\]
Thus the action of $F$ is quasi-idempotent. Therefore, on the level of the Grothendieck group,
$F$ induces a linear transformation which corresponds to a matrix $[F]$ satisfying  $f_n([F]) = 0$, i.e.\ the matrices mentioned in the paragraph above.
These kinds of problems appear, for instance, in \cite{MM5, MZ}.

The rest of the paper is outlined as follows. In the next section we introduce most of the necessary notation and preliminary results that we will need 
throughout the paper. In Section \ref{generalCase} we give a proof of Theorem A.
In Section \ref{counting} we study the integer sequences given by the number of irreducible integral matrices satisfying 
$X^2 = nX$, both when counting all matrices and when identifying matrices which are the same after permuting their basis vectors. 
We show how these sequences are related to other known sequences, and discuss their asymptotics.
\section{Preliminaries}

\subsection{Basic definitions and notation}
All matrices in this paper are assumed to be square matrices containing only integer elements. We denote by $\mats{k}{k}{\mathbb{K}}$ the set of 
$k\times k$--matrices with elements in the set $\mathbb{K}$, and denote by $0$ the matrix (of appropriate size) of all zeros.
For any matrix $A$, we denote by $(A)_{i,j}$ the element in the $i$:th row and $j$:th column of $A$. For two matrices $A, B$ of the same size, we say that $A \leq B$ provided that $(A)_{i,j} \leq (B)_{i,j}$ for all
$i, j$. If $A \leq B$ and $A \neq B$, then we write $A<B$. This defines a partial order on $\mat{k}$.

If $(A)_{i,j}>0$ for all $i,j$, then $A$ is called \emph{positive}. If  $(A)_{i,j}\geq 0$ for all $i,j$, then $A$ is called \emph{non-negative}. We say that $A$ is \emph{primitive} if it is non-negative and there exists $k > 0$ such that $A^k$ is positive.  If $A$ is non-negative and there for each
pair $i,j$ exists some $k$ such that $(A^k)_{i,j}$ is positive, then $A$ is said to be
\emph{irreducible}.

For $f \in \C[X]$, we define the sets 
\begin{align*}
 K_f^{>0} & = \bigcup_{k>0}\setof{A \in \mats{k}{k}{\Z_{> 0}}}{f(A) = 0}\\
 K_f^{\geq 0} & = \bigcup_{k>0}\setof{A \in \mats{k}{k}{\Z_{\geq 0}}}{A \text{ is irreducible}, f(A) = 0}
\end{align*}
of all irreducible square positive (resp. non-negative) integral matrices which are 
annihilated by $f$. In particular, these sets contain $1\times 1$--matrices. As mentioned above, we will study the case of $f = X^2 - nX$ 
in more detail, and define therefore
$f_n(X) := X^2 - nX$, $K_n^{\geq 0} := K_{f_n}^{\geq 0}$ and $K_n^{>0} := K_{f_n}^{>0}$. 

In Section \ref{counting} we count the elements in $K_n^{\geq 0}$ in two different ways. 
First, we simply count all of them. Secondly, we count all matrices up to permutation 
of basis vectors, by which we mean the following. Let $A, B \in \mat{k}$ and denote by $S_k$ the symmetric 
group on $k$ elements. Then, to each $\sigma \in S_k$, we assign the (permutation) matrix $P_{\sigma}$ which
is defined by $P_{\sigma}e_i = e_{\sigma(i)}$, on the elements of the standard basis $\{e_i\} \subseteq \R^k$. 
Note that $P_{\sigma}$ is an orthogonal matrix, i.e.\ $P_{\sigma}^{-1} = P_{\sigma}^t$. We say that $A$ and 
$B$ are the same up to permutation of basis vectors, denoted $A \approx B$, if there exists $\sigma \in S_k$ such that 
$P_{\sigma}^{-1}AP_{\sigma} = B$. The set of all matrices in $K_n^{\geq 0}$ up to permutation 
of basis vectors is denoted by $\overline{K}_n^{\geq 0} := K_n^{\geq 0}\slash \approx$.

\subsection{Auxiliary results}
For the proofs of our results we will need the following statement.
\begin{theorem}
 \label{noeth} 
 The semigroup $\Z_{\geq 0}^k$ is noetherian, for every $k>0$, i.e.\ all of its ideals are finitely generated.
\end{theorem}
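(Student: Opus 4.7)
My plan is to identify the semigroup ideals of $\Z_{\geq 0}^k$ with the upward-closed subsets under the componentwise partial order, and then reduce the statement to Dickson's lemma.

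First I would observe that a subset $I \subseteq \Z_{\geq 0}^k$ is a semigroup ideal, i.e.\ $I + \Z_{\geq 0}^k \subseteq I$, if and only if $I$ is upward-closed with respect to the componentwise order on $\Z_{\geq 0}^k$; this is because $y \geq x$ componentwise is equivalent to $y = x + z$ for some $z \in \Z_{\geq 0}^k$. Consequently, a subset $G \subseteq I$ generates $I$ as an ideal if and only if every $x \in I$ dominates some $g \in G$ componentwise, and the minimal such $G$ is exactly the set $\min(I)$ of componentwise-minimal elements of $I$. Hence the theorem reduces to the assertion that $\min(I)$ is finite for every non-empty subset $I \subseteq \Z_{\geq 0}^k$.

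This is Dickson's lemma. I would prove it by induction on $k$ via the equivalent property that componentwise order on $\Z_{\geq 0}^k$ is a well-quasi-order (every infinite sequence contains an infinite non-decreasing subsequence). The base case $k = 1$ is immediate: an infinite sequence in $\Z_{\geq 0}$ is either bounded, and thus has a constant subsequence, or unbounded, and thus has a strictly increasing subsequence. For the inductive step I would take an infinite sequence in $\Z_{\geq 0}^k$, extract an infinite subsequence non-decreasing in the first coordinate (by the base case), and then apply the inductive hypothesis to the last $k-1$ coordinates of this subsequence to refine it further to one non-decreasing in all coordinates.

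Finally, since $\min(I)$ is an antichain in the componentwise order and a well-quasi-order admits no infinite antichains, $\min(I)$ is finite, and it generates $I$; the theorem follows. The argument is classical, and the only mildly technical step is the iterated extraction of non-decreasing subsequences in the inductive proof of Dickson's lemma; I do not foresee any genuine obstacle.
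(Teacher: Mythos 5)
Your proof is correct. It differs from the paper's route: the paper identifies a vector $\alpha \in \Z_{\geq 0}^k$ with the exponent of the monomial $X_1^{\alpha_1}\cdots X_k^{\alpha_k}$ in $\C[X_1,\dots,X_k]$, under which semigroup ideals of $\Z_{\geq 0}^k$ correspond to monomial ideals, and then cites a lemma from Grillet's \emph{Abstract Algebra} asserting that monomial ideals are finitely generated. You instead reduce directly to Dickson's lemma and prove it from scratch via the well-quasi-order characterization and iterated extraction of coordinatewise non-decreasing subsequences. The two arguments are close in spirit — the cited lemma in Grillet is itself essentially Dickson's lemma in monomial clothing — but yours is self-contained, whereas the paper's is shorter at the cost of a citation. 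It is also worth noting that the subsequence-extraction device you use in the inductive step of Dickson's lemma is essentially identical to the argument the paper gives later for Lemma \ref{infChain} (every infinite subset of $\Z^r$ contains an infinite ascending chain), so your proof effectively folds that lemma's technique into the proof of Theorem \ref{noeth} rather than keeping the two separate.
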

\begin{proof}
 We may consider $\alpha \in Z_{\geq 0}^k$ as the exponent of a monomial 
$X_1^{\alpha_1}X_2^{\alpha_2}\cdots X_k^{\alpha_k}$ in  the polynomial ring 
$\C[X_1,\dots,X_n]$. With this identification, the result follows immediately from 
\cite[Lemma III.12.3]{G07}.
\end{proof}

Now, we can identify the additive semigroup $\mat{k}$ of non-negative integral $k\times k$-matrices with the 
additive semigroup
$\Z_{\geq 0}^{k^2}$ and thus we get that every ideal in this semigroup is finitely generated. This will be 
needed in Section \ref{generalCase}. 

The second important theorem which we are going to use at different points throughout this work is 
the \emph{Perron-Frobenius theorem}, more precisely the following version of it.

\begin{theorem}
 \label{pf}
 Let $A = (a_{i,j}) \in \text{Mat}_{k,k}(\R)$ be a non-negative irreducible matrix. Then the following holds.
 \begin{enumerate}[(i)]
  \item\label{pf.1} $A$ has an eigenvalue $r_A = r > 0$, the so-called \emph{Perron-Frobenius} eigenvalue, of 
  algebraic multiplicity one and  such that $r > |\lambda|$, for any other eigenvalue $\lambda$ of $A$.
  \item\label{pf.2} The Perron-Frobenius eigenvalue $r$ satisfies
        \[
         \min_i\sum_ja_{ij} \leq r \leq \max_i\sum_ja_{ij}.
        \]
  \item\label{pf.3} If $0 \leq A < B$, then $r_A \leq r_B$. Moreover, if $B$ is irreducible, then $r_A < r_B$.
 \end{enumerate}
\end{theorem}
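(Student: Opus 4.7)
The Perron-Frobenius theorem is classical, so the natural plan is to follow one of the standard proofs, for instance via the Collatz-Wielandt variational principle. For any nonzero non-negative vector $x \in \R^k$ define
$$r(x) = \min_{i : x_i > 0} \frac{(Ax)_i}{x_i},$$
and set $r = \sup_{x \geq 0,\, x \neq 0} r(x)$. Since $r(tx) = r(x)$ for any $t>0$, the sup may be taken over the compact simplex, and one shows it is attained. The crucial use of irreducibility is that $(I+A)^{k-1}$ is a strictly positive matrix, so replacing any candidate $x$ by $(I+A)^{k-1}x$ produces a strictly positive vector and either preserves the ratio (if $Ax = r x$) or strictly increases it. This yields a strictly positive eigenvector $v$ with $Av = r v$, and $r > 0$ follows from $r(x) > 0$ for positive $x$.

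For the simplicity assertion in (\ref{pf.1}), apply the same argument to $A^t$ (also irreducible) to obtain a strictly positive left eigenvector $u$ with $u^t A = r u^t$. If $w$ were a generalized eigenvector with $(A - rI)w = v$, then pairing with $u$ gives $0 = u^t(A-rI)w = u^t v$, contradicting the positivity of $u$ and $v$. For the dominance $r > |\lambda|$, if $\lambda$ is any other eigenvalue with eigenvector $z$, the elementwise bound $|Az| \leq A|z|$ gives $A|z| \geq |\lambda|\,|z|$, hence $|\lambda| \leq r$ by the variational characterization; one then argues via irreducibility that equality forces $\lambda = r$, exploiting the positions of the nonzero entries.

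Part (\ref{pf.2}) is an immediate consequence of the variational formula: plugging $x = (1,\ldots,1)^t$ into $r(x)$ gives $\min_i \sum_j a_{ij} \leq r$, and the upper bound follows dually from $r = \inf_{x > 0} \max_i (Ax)_i/x_i$, or by applying the lower bound to $A^t$ (which shares the spectrum of $A$). For part (\ref{pf.3}), let $v > 0$ be the Perron eigenvector of $A$; then $Bv \geq Av = r_A v$ coordinatewise, so Collatz-Wielandt applied to $B$ yields $r_B \geq r_A$. When $B$ is irreducible, let $u > 0$ be its left Perron eigenvector. Pairing gives
$$(r_B - r_A)\, u^t v = u^t(B-A)v > 0,$$
since $B - A$ is non-negative and nonzero while $u,v > 0$; hence $r_B > r_A$ strictly.

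The main obstacle is the dominance claim in (\ref{pf.1}); everything else reduces to variational manipulations and linear algebra with positive vectors, but comparing the Perron eigenvalue with other eigenvalues on the spectral circle requires a more delicate analysis of the cyclic structure of irreducible matrices, and is the step where most textbook proofs concentrate the real work. Given that this is a well-known result, the authors will almost certainly just cite a standard reference rather than reproduce the argument.
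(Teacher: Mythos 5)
The paper does not prove this theorem: it simply cites Gantmacher \cite{Ga59} (Chapter~XIII, \S2, Theorem~2 and Remark~2) for parts (i) and (ii), and Berman--Plemmons \cite{BP79} (2.1.5 and 2.1.10) for part (iii). You anticipated this correctly in your final paragraph, and your Collatz--Wielandt sketch is the standard route one would follow if one did want to prove it from scratch; parts (ii) and (iii) as you present them are sound.

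One substantive caveat, though, which concerns both your sketch and the paper's statement: the strict dominance $r > |\lambda|$ in item (i) is \emph{false} for general non-negative irreducible matrices. It requires primitivity. For example, $A = \left(\begin{smallmatrix}0 & 1 \\ 1 & 0\end{smallmatrix}\right)$ is irreducible with spectrum $\{1, -1\}$, so $r = 1 = |-1|$. The correct statement under irreducibility alone is $r \geq |\lambda|$ together with algebraic simplicity of $r$. Accordingly, the step in your sketch where you claim "one then argues via irreducibility that equality forces $\lambda = r$" cannot be completed as written; what irreducibility actually gives is that the peripheral spectrum is a rotation of itself by the $h$-th roots of unity (where $h$ is the index of imprimitivity), and equality $|\lambda| = r$ with $\lambda \neq r$ is genuinely possible. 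This slip does not harm the paper downstream, since the later arguments (Theorem~\ref{GenPoly} and Proposition~\ref{K_nEquivalence}) only use the weak dominance $r \geq |\lambda|$, the positivity of $r$, and its algebraic simplicity, but it is worth flagging that the stated form of (i) overclaims and that your proof of that clause has a real gap exactly where you suspected one.
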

\begin{proof}
 A proof of the first two statements can be found in Gantmacher's book \cite[Chapter XIII, \S2]{Ga59}. More precisely, \eqref{pf.1} is Theorem $2$ and \eqref{pf.2} 
 is Remark $2$. Lastly, \eqref{pf.3} follows from \cite[2.1.5 \& 2.1.10]{BP79}.
\end{proof}

\section{Finiteness proof for irreducible matrices}
\label{generalCase}
In this section we prove that the set $K_f^{\geq0}$ is finite, for any $f \in \C[X]$. Before we can do this, we need some 
notation and a lemma.

Let $x,y \in \Z_{\geq 0}^r$. We say that $x < y$ provided that $x_i \leq y_i$, for all $i$, and  there exists 
$j$ such that $x_j < y_j$. The reason for this partial order is the fact that we are going to 
study ideals $I$ in $\mat{k} \approxeq \Z_{\geq 0}^{k^2}$. By Theorem \ref{noeth}, we know that $I$ is finitely 
generated, say by some $B_1, \dots B_r$. Then,  for every $X \in I$, we have $X = \sum_{i=1}^r c_iB_i$. So, to 
every $X$ we can assign its coefficient vector $c_X = (c_i) \in \Z_{\geq 0}^r$. Now, we want to 
compare matrices in $X, Y \in I$ and then we get that $c_X < c_Y$ implies $X < Y$,  so we can 
study these coefficient vectors instead.

\begin{lemma}
 \label{infChain}
 Let $M \subseteq \Z^r$ be an infinite set. Then there exists an infinite ascending chain in $M$ with respect to 
 $<$ as defined above.
\end{lemma}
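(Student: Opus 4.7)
The plan is to extract the chain greedily, using Theorem~\ref{noeth} to guarantee that at each step we have only finitely many ``bottoms'' to choose from. The key observation is that any nonempty $N \subseteq \Z_{\geq 0}^r$ has only finitely many $<$-minimal elements: if $A$ denotes its set of minimal elements, then $A$ is an antichain, and the subset $A + \Z_{\geq 0}^r$ of $\Z_{\geq 0}^r$ is a semigroup ideal whose (unique) minimal generating set is exactly $A$. Theorem~\ref{noeth} therefore forces $A$ to be finite.

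With this in hand, I would construct the chain recursively. Set $M_0 := M$. At step $i$, given an infinite $M_i$, let $a_1, \ldots, a_s$ be its finitely many $<$-minimal elements. Every element of $M_i$ is $\geq$ some $a_j$, so by pigeonhole there is some $j$ such that infinitely many elements of $M_i$ are $\geq a_j$; since at most one of them can equal $a_j$, infinitely many are strictly greater. Set $m_{i+1} := a_j$ and $M_{i+1} := \{x \in M_i : x > m_{i+1}\}$, which is again infinite. Because $M_{i+1} \subseteq M_i$, the next choice $m_{i+2} \in M_{i+1}$ automatically satisfies $m_{i+2} > m_{i+1}$, and iterating produces the desired chain $m_1 < m_2 < m_3 < \cdots$.

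The only real (and mild) obstacle is the translation between the ideal-theoretic statement of Theorem~\ref{noeth} and the poset-theoretic statement that $(\Z_{\geq 0}^r, <)$ admits no infinite antichains; once one recognizes that the minimal elements of any subset form an antichain whose upward closure is the ideal they generate, this becomes routine. Strictly speaking the lemma as stated refers to $M \subseteq \Z^r$, but the argument needs the bounded-below structure of $\Z_{\geq 0}^r$ (which is what is actually used in Section~\ref{generalCase}); on all of $\Z^r$ one has infinite antichains such as $\{(n,-n) : n \in \N\} \subseteq \Z^2$, so the lemma cannot hold verbatim there.
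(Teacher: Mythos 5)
Your proof is correct, but it takes a genuinely different route from the paper's. The paper argues by coordinate-wise subsequence extraction: enumerate $M$, pick a subsequence whose first coordinate is strictly increasing (possible since some coordinate is unbounded), then refine to make the second coordinate non-decreasing, and so on through all $r$ coordinates — a self-contained Dickson's-lemma-style argument that never invokes Theorem~\ref{noeth}. You instead import Theorem~\ref{noeth} to deduce that every nonempty subset of $\Z_{\geq 0}^r$ has finitely many $<$-minimal elements (via the antichain-to-ideal translation, which is correct and uses well-foundedness of $\Z_{\geq 0}^r$ to guarantee that the minimal elements generate), and then greedily build the chain by pigeonhole. Both arguments are valid; yours is shorter given that Theorem~\ref{noeth} is already in place, while the paper's is more elementary and independent of the Noetherian machinery.

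You are also right to flag the $\Z^r$ versus $\Z_{\geq 0}^r$ issue: the lemma as literally stated for $\Z^r$ is false, your antichain $\{(n,-n) : n \in \N\}$ being a clean counterexample. It is worth noting that the paper's own proof has the same implicit restriction — the claim that one can extract a non-decreasing subsequence in each remaining coordinate fails for $\Z$-valued sequences (e.g.\ $-1,-2,-3,\dots$) and only holds over $\Z_{\geq 0}$, where every sequence is either bounded (giving a constant subsequence) or unbounded above. Since the application in Section~\ref{generalCase} only ever uses $M \subseteq \Z_{\geq 0}^r$, this is a harmless slip in the hypothesis, but the statement should read $\Z_{\geq 0}^r$.
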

\begin{proof}
 Note that $M$ is countably infinite, so there is an enumeration of $M = \{m^{(n)}\}$, where 
 $m^{(n)} = (m^{(n)}_1, \dots, m^{(n)}_r)$. Now, since $M$ is infinite, there exists one component in which $m^{(n)}$
 is unbounded. Without loss of generality assume it is  the first component. Pick a subsequence $n_k$ such that
 the sequence in the first component of $m^{(n_k)}$ is strictly increasing, i.e.\ $m^{(n_k)}_1 < m^{(n_{k+1})}_1$,
 for all $k$. Then there exists a subsequence $(n_{k_l})$ which is non-decreasing in the second component, i.e.\ such
 that $m^{(n_{k_l})}_2 \leq m^{(n_{k_{l+1}})}_2$ for $l$. Similarly, by taking subsequences of subsequences we get
 a subsequence $(n_p)$ of $n_k$ such that $m^{(n_p)}_1 < m^{(n_{p+1})}_1$ and $m^{(n_p)}_i \leq m^{(n_{p+1})}_i$,
 for all $p$ and all $2 \leq i \leq r$. This yields that $m^{(n_p)}$ is an infinite ascending chain in $M$.
\end{proof}

Now we are ready to prove the main result of this section which is Theorem A from the introduction.

\begin{theorem}
\label{GenPoly}
For any polynomial $f \in \C[x]$, the set $K^{\geq 0}_f$ is finite.
\end{theorem}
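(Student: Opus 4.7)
My approach would be proof by contradiction: assume $K_f^{\geq 0}$ is infinite and derive a contradiction using Theorem~\ref{pf} together with the noetherianity package set up above.

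First I would dispose of the ``same size'' case. Fix $k$ and suppose $K_f^{\geq 0}\cap \mat{k}$ were infinite. Under the identification $\mat{k}\cong \Z_{\geq 0}^{k^2}$, Lemma~\ref{infChain} yields an infinite strictly ascending chain $A_1<A_2<\cdots$ in $K_f^{\geq 0}\cap\mat{k}$. Since each $A_i$ is irreducible, the strict monotonicity statement of Theorem~\ref{pf} forces $r_{A_1}<r_{A_2}<\cdots$. But every $r_{A_i}$ is an eigenvalue of $A_i$ and hence a root of $f$, so only finitely many values are available, a contradiction. Thus $K_f^{\geq 0}\cap \mat{k}$ is finite for every $k$.

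The remaining difficulty---and the genuine obstacle---is to rule out unbounded sizes: I must produce a uniform bound $N=N(f)$ such that no $A\in K_f^{\geq 0}$ has size exceeding $N$. I would try to combine three ingredients: (a) the row-sum bound of Theorem~\ref{pf} forces some row of $A$ to have sum at most $r_A\leq \max\{|\lambda|:f(\lambda)=0\}$, so every $A$ has a ``thin'' row with a bounded number of (integer, hence $\geq 1$) non-zero entries; (b) the minimal polynomial of $A$ divides $f$, so $A$ satisfies the linear recurrence $A^{\deg f}=\sum_{e<\deg f}c_eA^e$ with fixed coefficients, placing rigid combinatorial constraints on the weighted directed graph of $A$; (c) $\mathrm{tr}(A^j)\in\Z_{\geq 0}$ equals $\sum_\lambda m_\lambda \lambda^j$, which restricts the possible profiles of algebraic multiplicities of the eigenvalues. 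A natural implementation would be induction on $\deg f$: pick an $A\in K_f^{\geq 0}$, split off the one-dimensional Perron eigenspace, pass to a compatible invariant quotient, and invoke the inductive hypothesis for a polynomial of strictly smaller degree.

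Once such a bound is in hand, $K_f^{\geq 0}=\bigsqcup_{k=1}^{N}(K_f^{\geq 0}\cap \mat{k})$ is a finite union of finite sets. The hardest step is unquestionably the uniform size bound; the per-size finiteness is essentially a routine combination of Perron--Frobenius and the noetherianity of $\Z_{\geq 0}^{k^2}$ packaged by Lemma~\ref{infChain}.
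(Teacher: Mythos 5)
Your treatment of the fixed-size case is correct and, if anything, slightly cleaner than the paper's: you apply Lemma~\ref{infChain} directly to $K_f^{\geq 0}\cap\mat{k}$ viewed inside $\Z_{\geq 0}^{k^2}$, whereas the paper takes an unnecessary detour through the ideal generated by this set and its (non-unique) coefficient vectors before landing on the same contradiction via Theorem~\ref{pf}\,(iii). That part is fine, and you are right that the crux is the uniform bound on the matrix size.

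But there you have a genuine gap: you list plausible ingredients (row-sum bounds, the recurrence from $f(A)=0$, traces of powers) and propose an induction on $\deg f$ by splitting off the Perron eigenspace, yet none of this is assembled into an actual argument, and the proposed induction is problematic. Quotienting an irreducible non-negative integral matrix by its Perron eigenspace need not produce a non-negative integral matrix at all, let alone an irreducible one annihilated by a polynomial of smaller degree, so the inductive hypothesis would not apply. The paper's argument is short and worth knowing: since $A$ is irreducible of size $k$, the matrix $\sum_{i=0}^{N}A^i$ is positive for some $N$ (in fact $N=k-1$ works). Because $f(A)=0$ with $\deg f=d$, every power $A^i$ with $i\geq d$ is a $\C$-linear combination of $I,A,\dots,A^{d-1}$; combined with non-negativity of each $A^i$, one deduces that already $B':=\sum_{i=0}^{d}A^i$ is positive, \emph{with $d$ independent of $k$}. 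Every entry of $B'$ is a positive integer, so each row sum of $B'$ is at least $k$; by Theorem~\ref{pf}\,(ii) the Perron eigenvalue of $B'$ is at least $k$. On the other hand, every eigenvalue of $B'$ has the form $\sum_{i=0}^{d}\lambda^i$ with $\lambda$ a root of $f$, so it is bounded in modulus by $\sum_{i=0}^{d}|x_0|^i$ where $x_0$ is the root of $f$ of largest modulus. Hence $k\leq \sum_{i=0}^{d}|x_0|^i$, a bound depending only on $f$. This is the missing step: without it, your sketch does not constitute a proof.
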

\begin{proof}
Let $f = \sum_{i=0}^d \alpha_ix^i$, for $\alpha_i \in \C$ and $A \in \mat{k}$ such that $f(A) = 0$.
Then the set of eigenvalues of $A$ is a subset of the zeros of $f$. Denote by $x_0$ the zero of $f$ 
with the highest absolute value, which by Theorem \ref{pf} is an upper bound for the absolute value of 
all eigenvalues of $A$.

As a first step we prove that the size of the matrix, i.e.\ $k$, is bounded.
Since $A$ is irreducible, there exists $N > 1$ such that $B := \sum_{i = 0}^N A^i$ is positive. On the other hand 
we know that $f(A) = \sum_{i=0}^d \alpha_i A^i = 0$, which implies that $A^d$ is a $\C$-linear combination of 
smaller powers of $A$. Together, we get that there exist $\gamma_i \in \C$ such that 
\[
 B = \sum_{i=0}^d \gamma_iA^i,
\]
and as all $A^i$ are non-negative, we get that $B' := \sum_{i = 0}^d A^i$ is positive. Note that the eigenvalues of $B'$
are of the form $\sum_{i=0}^d \lambda^i$, where $\lambda$ is an eigenvalue of $A$. In particular, the largest eigenvalue $\lambda_{B'}$
of $B'$ is less than or equal to $\sum_{i=0}^d |x_0|^i$. Now, we can apply The
\ref{pf}.\eqref{pf.2} to obtain
\[
 k \leq \text{min}_i \sum_j b_{ij} \leq \l_{B'} \leq \sum_{i=0}^d|x_0|^i,
\]
since all $b_{ij} \geq 1$. Thus $k$ is bounded. 

Now fix $k$ and assume that the set $Y = \setof{A \in \mat{k}}{f(A)} = 0$ is infinite.
Consider the ideal $I\subseteq \mat{k} \simeq \Z_{\geq 0}^{k^2}$ generated by $Y$. We want to use 
the fact that $\Z_{\geq 0}^{k^2}$ is noetherian to obtain a contradiction and thus prove that 
$Y$ has to be finite.

From Theorem \ref{noeth} we get that $I$ is finitely generated. Let $B_1, \dots, B_r$ be a set of 
generators of $I$. Note that $f$ does not necessarily annihilate any of the $B_i$. 
Then we can express every $A \in Y$ as a linear combination of the $B_i$, i.e.\
\[
 A = \sum_{i=1}^r c_{A,i}B_i.
\]
The set $M = \{c_A\}$ of coefficient vectors is an infinite subset of $\Z_{\geq 0}^r$ and thus
Lemma \ref{infChain} yields that there is an infinite ascending chain $c_{A_k}$ in $M$. On the other hand, 
we have already seen that this means that this is equivalent to having an infinite ascending chain
$A_k$ of matrices in $Y$, with respect to $<$. 

However, by Theorem \ref{pf} $(3)$, this yields that there is an infinite sequence 
of different Perron-Frobenius eigenvalues, a contradiction, as all eigenvalues,   
in particular, have to be zeros of $f$.
\end{proof}

\section{Counting quasi-idempotent matrices}
\label{counting}
In this section we consider quasi-idempotent matrices, which are matrices $A$ satisfying $A^2=nA$ for some $n\geq 1$. Recall that
\begin{align*}
 K_n^{\geq 0} & = \bigcup_{k>0}\setof{A \in \mats{k}{k}{\Z_{> 0}}}{A^2 = nA}\\
 K_n^{>0} & =\bigcup_{k>0}\setof{A \in \mats{k}{k}{\Z_{\geq 0}}}{A \text{ is irreducible}, A^2 = nA}
\end{align*}

Our first result shows that such matrices factorize in a natural way. 

\begin{proposition}
\label{K_nEquivalence}
 Let $A \in \mat{k}$ be irreducible. Then the following are equivalent:
 \begin{enumerate}[(i)]
  \item $A^2 - nA = 0$, i.e.\ $A \in K_n^{\geq 0}$; \label{equiv1}
  \item $A$ has rank $1$, trace $n$ and there exist $v, w \in \Z^k_{>0}$ such that $A = vw^t$; \label{equiv2}
  \item $A^2 - nA = 0$ and $A$ is positive, i.e.\ $A \in K_n^{>0}$. \label{equiv3}
 \end{enumerate}
\end{proposition}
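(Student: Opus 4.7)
The plan is to prove the cycle (ii)$\Rightarrow$(iii)$\Rightarrow$(i)$\Rightarrow$(ii). Two of these are immediate: (iii)$\Rightarrow$(i) is trivial, and for (ii)$\Rightarrow$(iii), if $A=vw^t$ with $v,w\in\Z_{>0}^k$ then $A_{ij}=v_iw_j>0$ so $A>0$, while $A^2 = v(w^tv)w^t = (w^tv)A$, which equals $nA$ because $w^tv = \text{tr}(vw^t) = \text{tr}(A) = n$.

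The main work is (i)$\Rightarrow$(ii). First, positivity: an induction on the relation $A^2=nA$ gives $A^k = n^{k-1}A$ for all $k\geq 1$, and irreducibility provides for each pair $(i,j)$ some $k$ with $(A^k)_{i,j}>0$, hence $n^{k-1}A_{i,j}>0$, forcing both $n>0$ and $A_{i,j}>0$. Now the spectrum: since $n>0$, the minimal polynomial of $A$ divides the separable polynomial $X(X-n)$, so $A$ is diagonalizable with spectrum contained in $\{0,n\}$. By Theorem \ref{pf}\eqref{pf.1}, the Perron--Frobenius eigenvalue $r_A$ is positive, simple, and of maximal modulus, so $r_A=n$. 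Thus $n$ has algebraic multiplicity one, all other eigenvalues vanish, and $A$ has rank $1$ and trace $n$.

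It remains to produce the integer factorization. A rank-one matrix can always be written as $vw^t$ with $v,w\in\R^k$. To obtain integer vectors directly, let $g$ be the gcd of the entries of the first row of $A$ (a positive integer since $A>0$) and set $\tilde w^t := g^{-1}\cdot(\text{first row of }A)$, so that $\tilde w$ is a primitive positive integer vector. Every row of $A$ must be a scalar multiple of $\tilde w^t$; primitivity of $\tilde w$ combined with integrality of $A$ forces these scalars to be integers, and they are positive because $A>0$. Collecting them into $\tilde v\in\Z_{>0}^k$ yields the desired factorization $A = \tilde v\tilde w^t$.

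The main obstacle is this last integrality step: lifting a real rank-one factorization to one with both factors in $\Z_{>0}^k$. Normalizing by the gcd of a single row handles it cleanly, since primitivity of $\tilde w$ then forces the remaining coordinates of $\tilde v$ to be positive integers. Everything else is standard Perron--Frobenius bookkeeping combined with the fact that $X(X-n)$ has simple roots.
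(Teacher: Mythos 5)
Your proof is correct and follows the same cyclic strategy and Perron--Frobenius backbone as the paper. The one genuine difference is that you treat the integer factorization step with more care: the paper simply asserts that a rank-$1$ matrix with eigenvalues in $\{0,n\}$ ``can be written as $vw^t$ for some $v,w\in\Z^k$'' without explaining why the factors may be taken in $\Z^k$ rather than merely $\R^k$, whereas you give the explicit construction (normalize one row by its gcd to obtain a primitive integer vector $\tilde w$, then use primitivity plus integrality of $A$ to force the remaining scalars into $\Z$). This is a real gap in the paper's exposition that your argument closes. A minor stylistic difference is ordering: you establish positivity of $A$ first (via $A^k=n^{k-1}A$ and irreducibility), then extract the spectral information; the paper goes straight to the spectrum and recovers positivity only later as the easy implication (ii)$\Rightarrow$(iii). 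Both orderings work, and yours has the small advantage of making $n>0$ and $A>0$ available before the rank-one analysis, so the positivity of the entries of $v$ and $w$ is immediate rather than something to be checked at the end.
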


\begin{proof}
 Clearly \eqref{equiv3} implies \eqref{equiv1}.

 To show that \eqref{equiv1} implies \eqref{equiv2}, let $A$ be an irreducible matrix satisfying $A^2 = nA$. The only possible eigenvalues of $A$ are 
 $0$ and $n$. By Theorem \ref{pf}, we have that $n$ has to be the Perron-Frobenius eigenvalue. In particular, 
 all other eigenvalues have to be zero. Thus it follows that $A$ has rank $1$ and trace $n$, and that it
 can be written as $vw^t$ for some $v, w \in \Z^k$. However, since $A$ is irreducible, it cannot have a row or 
 column consisting of $0$'s, which means that all $v_i, w_i$ are non-zero. Since $A$ is non-negative, we have $v, w \in \mathbb{Z}_{>0}^k$. 
 
 Finally, to show that \eqref{equiv2} implies \eqref{equiv3}, let $A$ be an irreducible matrix with rank $1$ and trace $n$ such that $M = vw^t$ for some $v, w \in \Z^k_{>0}$.
 Then 
 \[
   A^2 = (vw^t)(vw^t) = v(w^tv)w^t = v\left(\sum_{i=1}^k w_iv_i\right) w^t = n \cdot vw^t = nA,
 \]
 where we use that $\sum_{i=1}^k w_iv_i$ is the trace of $A$. Since $v$ and $w$ are positive, $A$ is also positive.
\end{proof}

This result shows that, if we want to count matrices in $K_n^{\geq 0}$, we may restrict our
attention to pairs $(v,w) \in \Z_{>0}^k \times \Z_{>0}^k$ such that $\sum_{i = 1}^k v_iw_i = n$. 
Such pairs may be seen as generalized compositions as introduced by Corteel and Hitczenko \cite{CH07}. 
More formally, a \emph{generalized composition} of $n$ is a generalized word $v_1^{w_1}v_2^{w_2}\dots v_k^{w_k}$ such that $v_i, w_i >0$ and 
$\sum_i v_iw_i = n$. To simplify notation, if $v=(v_1,\dots, v_k)$ and $w=(w_1,\dots, w_k)$, then we write $v^w$ to represent the above word. We let
\begin{align*}
 \mathcal{C}_n & = \setof{v^w}{v, w \in \Z^k_{>0}, \sum_{i = 1}^k v_iw_i = n}
\end{align*}
be the set of all generalized compositions and let $c_n$ denote the cardinality of this set. 

By Proposition \ref{K_nEquivalence}, each generalized composition then corresponds to a positive matrix satisfying $A^2-nA=0$. However, 
this identification is not injective. For instance, since $(2,2)\cdot (1,1)^t = (1,1) \cdot (2,2)^t$, the generalized compositions $2^12^1$ 
and $1^21^2$ correspond to the same matrix. Therefore the question becomes which generalized compositions should be identified. It turns out 
that we only need to look at those generalized compositions which have greatest common divisor $1$, where the \emph{greatest common divisor} 
of $v^w$ is defined as follows:
\[
 \gcd{(v^w)} = \begin{cases}
              \gcd{(v_1, v_2, \ldots, v_k)}, & \text{if } k > 1, \\
              v_1, & \text{if } k = 1. 
             \end{cases}
\]

For each $n\geq 1$, let 
\[
 \mathcal{D}_n = \setof{v^w \in \mathcal{C}_n}{\gcd{(v^w)} = 1},
\]
and denote by $d_n$ the cardinality of $\mathcal{D}_n$.

\begin{lemma}
 \label{IsoComp}
 The map 
 \begin{align*}
  \varphi: \mathcal{D}_n &\to K_n^{\geq 0}    \\
	   v^w &\mapsto vw^t,
 \end{align*}
 is a bijection.
\end{lemma}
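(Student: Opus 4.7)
The plan is to verify that $\varphi$ is well-defined, surjective, and injective in turn. Well-definedness is immediate from Proposition \ref{K_nEquivalence}: for any $v^w \in \mathcal{D}_n$, the matrix $vw^t$ has positive entries (since $v, w \in \Z_{>0}^k$), rank one, and trace $\sum_i v_iw_i = n$, so by the implication (ii)$\Rightarrow$(iii) it lies in $K_n^{>0} \subseteq K_n^{\geq 0}$.

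For surjectivity, let $A \in K_n^{\geq 0}$. The implication (i)$\Rightarrow$(ii) of Proposition \ref{K_nEquivalence} provides vectors $v, w \in \Z_{>0}^k$ with $A = vw^t$ and $\sum_i v_iw_i = n$. This factorization need not satisfy the gcd condition, but a simple normalization fixes it: setting $d = \gcd(v_1, \ldots, v_k)$ and replacing $(v,w)$ by $(v/d,\, dw)$ preserves both the product $vw^t$ and the sum $\sum_i v_iw_i$, while forcing the gcd of the first vector to equal $1$. In the boundary case $k=1$ this yields $v = (1)$, $w = (n)$, which is the unique length-one element of $\mathcal{D}_n$ and corresponds to the unique $1\times 1$ matrix $(n) \in K_n^{\geq 0}$.

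For injectivity, suppose $v^w,\,\tilde v^{\tilde w} \in \mathcal{D}_n$ satisfy $vw^t = \tilde v\tilde w^t$. The two matrices share a common size $k$, so $v, \tilde v \in \Z_{>0}^k$. Comparing the $j$-th columns gives $w_j v = \tilde w_j \tilde v$ for every $j$; since the $w_j, \tilde w_j$ are strictly positive, this forces $\tilde v = cv$ for some $c \in \Q_{>0}$, and substitution then yields $w = c\tilde w$. Writing $c = p/q$ in lowest terms, the relation $q\tilde v_i = pv_i$ combined with $\gcd(p,q) = 1$ implies $q \mid v_i$ and $p \mid \tilde v_i$ for every $i$, hence $q \mid \gcd(v_1,\ldots,v_k) = 1$ and $p \mid \gcd(\tilde v_1, \ldots, \tilde v_k) = 1$, giving $c = 1$ and $(v,w) = (\tilde v, \tilde w)$. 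I expect this final gcd argument to be the main step, since it is the only place where membership in $\mathcal{D}_n$ (rather than in the larger $\mathcal{C}_n$) is used --- the example $2^12^1$ versus $1^21^2$ preceding the lemma confirms that without the gcd restriction $\varphi$ would fail to be injective.
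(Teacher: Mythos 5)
Your proof is correct and follows essentially the same three-step scheme as the paper's: well-definedness and surjectivity both come from Proposition \ref{K_nEquivalence} together with a gcd-normalization of the factorization $A=vw^t$, and injectivity exploits the gcd-$1$ constraint. One point worth noting is that your injectivity step is actually more careful than the paper's. The paper asserts that $vw^t = v'w'^t$ forces $v=cv'$ or $cv=v'$ for an \emph{integer} $c$, which is not true as a general statement about rank-one factorizations (e.g.\ $v=(2,4)^t$, $w=(3,3)^t$, $v'=(3,6)^t$, $w'=(2,2)^t$ give equal products but no integer scalar relation between $v$ and $v'$), and then derives a contradiction from $\gcd\neq 1$ without separately handling $c=1$. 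You correctly observe that a priori the scalar $c$ is only rational, write $c=p/q$ in lowest terms, and use $\gcd(v)=\gcd(\tilde v)=1$ to conclude $p=q=1$ and hence equality; this is the rigorous version of what the paper is gesturing at. The remaining ingredients (reducing to $K_n^{>0}$ for well-definedness, dividing by $\gcd$ for surjectivity, the $k=1$ boundary case) match the paper's argument.
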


\begin{proof}
 By Proposition \ref{K_nEquivalence} it follows that $\varphi$ is well-defined. To show surjectivity, let $A \in K_n^{\geq 0}$ be a $k \times k$-matrix. 
 By Proposition \ref{K_nEquivalence} we know that there exist $v, w \in \Z_{>0}^k$ such that $M = vw^t$. 
 Moreover, if $\gcd{(v^w)} = c \neq 1$, then 
 we can write $A = (\frac{1}{c}v)(cw)^t$. Setting $v' = \frac{1}{c}v$ and $w' = cw$, we get $v'^{w'} \in \mathcal{D}_n$, as the trace of $A = v'w'^t$ is $n$ and thus
 $\varphi$ is surjective. 
 
 To show injectivity, let $v^w, v'^{w'} \in \mathcal{D}_n$ and assume that $\varphi(v^w) = \varphi(v'^{w'})$. Then $vw^t = v'w'^t$, which is only possible if 
 $v = cv'$ or $cv = v'$ for some $c \in \Z$. This in turn implies that $\gcd{(v^w)} \neq 1$ or $\gcd{(v'^{w'})} \neq 1$, a contradiction. 
\end{proof}

When we count elements of $\overline{K}_n^{\geq 0}$, we identify matrices which are equal after permutation of basis vectors.
Permuting the basis vectors of a matrix $vw^t$ corresponds to applying the same permutation to the elements of the generalized 
composition $v^w=v_1^{w_1}\dots v_k^{w_k}$. Counting up to permutation of basis vector therefore means that we should consider 
generalized partitions rather than generalized compositions. A \emph{generalized partition} of $n$ is defined to be a  
generalized composition $v_1^{w_1}v_2^{w_2}\ldots v_k^{w_k}$ where we additionally 
assume that $v_1w_1 \geq v_2w_2 \geq \cdots \geq v_kw_k$. For all $n > 0$, we let
\begin{align*}
 \mathcal{P}_n & = \setof{v^w}{v, w \in \Z^k_{>0}, \sum_{i = 1}^k v_iw_i = n, v_1w_1 \geq v_2w_2 \geq \cdots \geq v_kw_k}\\
\end{align*}
be the set of all generalized partitions of $n$, and we let $p_n$ denote the cardinality of $\mathcal{P}_n$.

 Analogously to the previous case, it turns out that the set of all 
generalized partitions is slightly larger than $\overline{K}_n^{\geq 0}$ and that the correct set to consider is
\[
 \mathcal{Q}_n = \setof{v^w \in \mathcal{P}_n}{\gcd{(v^w)} = 1},
\]
i.e.\ , all generalized partitions which have greatest common divisor $1$. We denote the cardinality of $\mathcal{Q}_n$ by $q_n$.
\begin{lemma}
 \label{IsoPart}
 The map 
 \begin{align*}
  \varphi: \mathcal{Q}_n &\to \overline{K}_n^{\geq 0}    \\
	   v^w &\mapsto vw^t,
 \end{align*}
 is a bijection.
\end{lemma}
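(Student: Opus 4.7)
The plan is to reduce this statement to Lemma~\ref{IsoComp} by translating the relation $\approx$ through the rank-one factorisation. Using orthogonality of permutation matrices ($P_\sigma^{-1}=P_\sigma^t$), the key preliminary identity to record is
\[
 P_\sigma^{-1}(vw^t)P_\sigma \;=\; (P_\sigma^{-1}v)(P_\sigma^{-1}w)^t,
\]
which shows that $\approx$ acts on matrices of the form $vw^t$ simply by simultaneously permuting the index set of the sequence of pairs $(v_i,w_i)_i$. Combined with Lemma~\ref{IsoComp}, this identifies $\overline{K}_n^{\geq 0}$ with the quotient of $\mathcal{D}_n$ by simultaneous coordinate permutations, reducing the lemma to the claim that $\mathcal{Q}_n$ is a system of canonical representatives.

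Well-definedness and surjectivity are then routine. Since $\mathcal{Q}_n\subseteq\mathcal{D}_n$, well-definedness is inherited from Lemma~\ref{IsoComp}. For surjectivity, given a class $[A]\in\overline{K}_n^{\geq 0}$, Lemma~\ref{IsoComp} produces $v^w\in\mathcal{D}_n$ with $A=vw^t$, and sorting the pairs $(v_i,w_i)$ by non-increasing product $v_iw_i$ preserves both $\sum v_iw_i=n$ and $\gcd(v^w)=1$, yielding an element of $\mathcal{Q}_n$ mapping to the same class. For injectivity, if $vw^t\approx v'w'^t$ with both $v^w,v'^{w'}\in\mathcal{Q}_n$, the displayed identity gives rank-one factorisations $(P_\sigma^{-1}v)(P_\sigma^{-1}w)^t=v'w'^t$ with gcd-$1$ data on each side, and the scalar-rigidity argument from the proof of Lemma~\ref{IsoComp} forces $P_\sigma^{-1}v=v'$ and $P_\sigma^{-1}w=w'$; hence the multisets $\{(v_i,w_i)\}$ and $\{(v'_i,w'_i)\}$ coincide.

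The step I expect to be the main obstacle is the final passage from equal multisets to equal sequences. The monotonicity condition $v_iw_i\geq v_{i+1}w_{i+1}$ does not by itself pin down a unique ordering when distinct pairs share the same product (for example $(2,3)$ and $(3,2)$, both contributing $6$). To close the argument I would either impose a tacit tie-breaking rule on $\mathcal{Q}_n$, such as lexicographic order on $(v_i,w_i)$ among pairs of equal product (which leaves $q_n$ unchanged), or, more cleanly, reinterpret $\mathcal{Q}_n$ as the set of unordered multisets of pairs $(v_i,w_i)\in\Z_{>0}^2$ with $\sum v_iw_i=n$ and $\gcd=1$, in which case injectivity is immediate from the matching of multisets.
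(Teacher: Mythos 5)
Your proof is correct and takes the same route the paper intends; the paper's own proof is the single sentence ``Analogous to the proof of Lemma~\ref{IsoComp},'' so your version supplies the details that are left implicit. The identity $P_\sigma^{-1}(vw^t)P_\sigma = (P_\sigma^{-1}v)(P_\sigma^{-1}w)^t$ is exactly the bridge between the relation $\approx$ on $K_n^{\geq 0}$ and the simultaneous permutation of the pairs $(v_i,w_i)$, and the scalar-rigidity argument from Lemma~\ref{IsoComp} then carries over verbatim.

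The obstacle you flag at the end is a genuine imprecision in the paper's setup rather than a defect of your argument. As literally defined, $\mathcal{P}_n$ consists of \emph{ordered} tuples subject only to $v_1w_1 \geq \cdots \geq v_kw_k$, which fails to fix the order among pairs with equal product: for example $2^3\,3^2$ and $3^2\,2^3$ are distinct elements of $\mathcal{Q}_{12}$ yet map to the same $\approx$-class, so $\varphi$ as written is not injective. Your multiset reinterpretation is the right repair, and it is in fact the one the paper tacitly uses later: the generating function $\prod_{n\geq 1}(1-z^n)^{-d(n)}$ in Section~\ref{pnassymptotics} is the Euler transform of $d(n)$, i.e.\ it enumerates multisets of divisor pairs, not monotone sequences. (A lexicographic tie-break among equal-product pairs would give the same cardinality and also works, but the multiset reading is cleaner and matches the generating function.) With $\mathcal{P}_n$, $\mathcal{Q}_n$ understood as multisets, your proof is complete.
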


\begin{proof}
 Analogous to the proof of Lemma \ref{IsoComp}.
\end{proof}

Due to the results above, instead of considering the numbers $|K_{n}^{\geq 0}|$ and $|\overline{K}_n^{\geq 0}|$, we may consider the sequences  
$(d_n)_{n=1}^{\infty}$ (\seqnum{A280782}) and $(q_n)_{n=1}^{\infty}$ (\seqnum{A280783}), respectively. The following lemma shows that these are related via M\"{o}bius inversion to the sequences $(c_n)_{n=1}^{\infty}$ (\seqnum{A129921}) and $(p_n)_{n=1}^{\infty}$ (\seqnum{A006171}), respectively.  Recall that the M\"{o}bius function $\mu: \Z_{> 0}\to \{-1,0,1\} $ is defined on prime powers $p^k$ by 
\begin{align*}
 \mu(p^k)=\begin{cases}0 & \text{ if } k>2, \\ -1 & \text { if } k=1, \\ +1 & \text{ if } k=0.  \end{cases}
\end{align*}
and extended to non-prime powers $p_1^{k_1}p_2^{k_2}\dots p_m^{k_{m}}$ multiplicatively, i.e.\ $\mu(p_1^{k_1}p_2^{k_2}\dots p_m^{k_{m}}):=\prod_{i=1}^{m}\mu(p_i^{k_i})$.

\begin{lemma}
The sequences $(c_n)_{n=1}^{\infty}$, $(d_n)_{n=1}^{\infty}$ ,$(p_n)_{n=1}^{\infty}$ and $(q_n)_{n=1}^{\infty}$ satisfy
\begin{align*}
\begin{cases}
 c_n=\sum_{r|n}d_r \\ 
 d_n=\sum_{r|n}\mu(r)c_{n/r}
\end{cases}
\text{ and } \qquad 
\begin{cases}
     p_n=\sum_{r|n}q_r \\ 
 q_n=\sum_{r|n}\mu(r)p_{n/r}.
\end{cases}
\end{align*}

\end{lemma}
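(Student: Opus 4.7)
The plan is to establish the first identity of each pair by a direct bijective argument (partitioning the set of generalized compositions/partitions by greatest common divisor), and then obtain the second identity from the first via standard Möbius inversion. The argument is essentially the same as the one showing that $n = \sum_{d \mid n} \varphi(d)$, where one counts fractions in lowest terms.

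First I would fix $n \geq 1$ and consider any $v^w \in \mathcal{C}_n$. Let $d = \gcd(v^w)$; by definition, $d$ divides each $v_i$, so $d$ divides $\sum_i v_i w_i = n$. Writing $v = d\tilde v$, we have $\tilde v \in \Z_{>0}^k$ with $\gcd(\tilde v) = 1$, and $\sum_i \tilde v_i w_i = n/d$, so $\tilde v^w \in \mathcal{D}_{n/d}$. Conversely, for each divisor $d$ of $n$ and each $u^w \in \mathcal{D}_{n/d}$, the element $(du)^w$ lies in $\mathcal{C}_n$ and has greatest common divisor equal to $d$. These two constructions are mutually inverse, so for each $d \mid n$ they give a bijection between $\mathcal{D}_{n/d}$ and the subset of $\mathcal{C}_n$ consisting of those $v^w$ with $\gcd(v^w) = d$. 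Since $\mathcal{C}_n$ is the disjoint union of these subsets over $d \mid n$, summing cardinalities yields
\begin{equation*}
 c_n = \sum_{d \mid n} d_{n/d} = \sum_{r \mid n} d_r,
\end{equation*}
after reindexing $r = n/d$.

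For the partition case, I would use the same bijection $v^w \mapsto (dv)^w$, and just verify that it respects the extra ordering condition $v_1 w_1 \geq v_2 w_2 \geq \cdots \geq v_k w_k$. Scaling every $v_i$ by the same positive integer $d$ multiplies each product $v_i w_i$ by $d$ and hence preserves the ordering, so the bijection restricts to one between $\mathcal{Q}_{n/d}$ and the elements of $\mathcal{P}_n$ of greatest common divisor $d$. This gives $p_n = \sum_{r \mid n} q_r$.

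Once both summation identities are established, the inverted formulas
\begin{equation*}
 d_n = \sum_{r \mid n} \mu(r)\, c_{n/r}, \qquad q_n = \sum_{r \mid n} \mu(r)\, p_{n/r}
\end{equation*}
follow from the classical Möbius inversion formula applied to arithmetic functions on the positive integers. There is no real obstacle here; the only point that requires care is checking that the rescaling map $v \mapsto dv$ does not disturb the defining inequalities of a generalized partition, which is immediate from the simultaneous rescaling of all products $v_i w_i$.
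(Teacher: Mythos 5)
Your proof is correct and takes essentially the same approach as the paper: partitioning $\mathcal{C}_n$ (resp.\ $\mathcal{P}_n$) by greatest common divisor, exhibiting a scaling bijection between the gcd-$d$ part and $\mathcal{D}_{n/d}$ (resp.\ $\mathcal{Q}_{n/d}$), and then invoking Möbius inversion. You in fact make explicit a small point the paper leaves implicit (that rescaling every $v_i$ by $d$ preserves the ordering condition defining a generalized partition), so if anything your writeup is slightly more complete.
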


\begin{proof}
We prove the first two equalities only.
For any $n\geq 1$, let $\mathcal{C}_{n,r}$ denote the set of generalized compositions $v^w$ of $ n$ such that $\gcd(v^w)=r$. 
Note that $\{\mathcal{C}_{n,r} \ : \ r|n\}$ is a partition of $\mathcal{C}_n$. Moreover, for any $n\geq 1$ and any $r$ which divides $n$, the map 
\begin{align*}
 \mathcal{C}_{n,r} &\to \mathcal{D}_{n/r} \\
v_1^{w_1}\cdots v_k^{w_k} &\mapsto (v_1/r)^{w_1}\cdots (v_k/r)^{w_k}
\end{align*}
is easily seen to be a bijection. This proves the first equality. The second equality follows from the M\"{o}bius inversion formula (cf. \ \cite{HW60})
The proof for the sequences $(p_n)_{n=1}^{\infty}$ and $(q_n)_{n=1}^{\infty}$ is done in the same spirit.
\end{proof}

\subsection{Analysis of the asymptotics}
\label{asymptotics}
As mentioned, the sequence $(c_n)_{n=1}^{\infty}$ was studied by Corteel and Hitczenko \cite{CH07}. 
Using standard methods from analytic combinatorics, they determined the asymptotic growth rate of $(c_n)_{n=1}^{\infty}$. As we show next, 
the sequence $(d_n)_{n=1}^{\infty}$ grows asymptotically at the same rate as $(c_n)_{n=1}^{\infty}$. As a shorthand we write $a_n \sim b_n$, if
\[
 \lim_{n \to \infty} \frac{a_n}{b_n} = 1.
\]

\begin{proposition}
\label{lem:dasymp}
 As $n\to \infty$,
\begin{align}
 d_n \sim \frac{1}{\rho\sigma'(\rho)}\rho^{-n} \label{eq:dasymp}
\end{align}
where $\sigma(z)=\sum_{n\geq 1}\frac{z^n}{1-z^n}$ and $\rho$ is the unique real root of $\sigma(z)=1$ in $[0,1]$.
\end{proposition}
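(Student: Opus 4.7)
The plan is to bootstrap the asymptotic for $d_n$ from the corresponding one for $c_n$, using the Möbius inversion identity just established. First, I would reinterpret (or briefly reconstruct) the Corteel--Hitczenko analysis in analytic-combinatorial terms. A generalized composition is a sequence of letters $v^w$, and a single letter of total weight $m$ corresponds to a factorization $m=vw$ with $v,w \geq 1$; so the weight generating function of one letter is exactly
\[
L(z) \;=\; \sum_{v,w\geq 1}z^{vw} \;=\; \sum_{v\geq 1}\frac{z^v}{1-z^v} \;=\; \sigma(z).
\]
Concatenating letters gives the OGF of generalized compositions,
\[
C(z) \;=\; \sum_{n\geq 1}c_n z^n \;=\; \frac{\sigma(z)}{1-\sigma(z)}.
\]
Since $\sigma$ is analytic on $|z|<1$, continuous and strictly increasing on $[0,1)$ with $\sigma(0)=0$ and $\sigma(z)\to \infty$ as $z\to 1^-$, there is a unique $\rho\in(0,1)$ with $\sigma(\rho)=1$, and this is the dominant singularity of $C$; it is a simple pole with residue $-1/\sigma'(\rho)$. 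Standard singularity analysis (Flajolet--Sedgewick transfer theorems) then yields
\[
c_n \;\sim\; \frac{1}{\rho\,\sigma'(\rho)}\,\rho^{-n}.
\]

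Second, from the lemma we have the Möbius inversion $d_n=\sum_{r\mid n}\mu(r)\,c_{n/r}$. I would isolate the $r=1$ term, which equals exactly $c_n$, and bound the remainder. For each divisor $r\geq 2$, $c_{n/r}=O(\rho^{-n/r})\leq O(\rho^{-n/2})$, and since the number of divisors satisfies $\tau(n)=O(n^{\varepsilon})$ for every $\varepsilon>0$, the correction is controlled by
\[
\bigl|d_n - c_n\bigr| \;\leq\; \sum_{\substack{r\mid n\\ r\geq 2}} c_{n/r} \;=\; O\!\bigl(\tau(n)\,\rho^{-n/2}\bigr) \;=\; o(\rho^{-n}).
\]
Since $c_n$ is of exact order $\rho^{-n}$, this gives $d_n = c_n\,(1+o(1))$, so $d_n \sim \frac{1}{\rho\,\sigma'(\rho)}\rho^{-n}$ as claimed.

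The main obstacle is a bookkeeping one rather than a conceptual one: verifying that Corteel--Hitczenko's result is stated in exactly the form asserted here. This reduces to two standard checks on $C(z)=\sigma(z)/(1-\sigma(z))$: (i) $\rho$ is the unique zero of $1-\sigma$ on the closed disc $|z|\leq \rho$, which I would handle by noting that $|\sigma(z)|\leq \sigma(|z|)<1$ for $|z|<\rho$ and invoking analytic uniqueness on $|z|=\rho$; and (ii) the pole at $\rho$ is simple, which follows from $\sigma'(\rho)\neq 0$ (immediate, since $\sigma$ has strictly positive coefficients). Given these, the passage from $c_n$ to $d_n$ via Möbius inversion is mechanical, and the conclusion follows.
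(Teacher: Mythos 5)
Your proposal takes essentially the same approach as the paper: both rely on the Corteel--Hitczenko asymptotic for $c_n$ (the paper cites it, you re-derive it via the OGF $C(z)=\sigma(z)/(1-\sigma(z))$ and singularity analysis), and both then obtain $d_n\sim c_n$ by isolating the $r=1$ term in the M\"{o}bius inversion $d_n=\sum_{r\mid n}\mu(r)c_{n/r}$ and bounding the remaining terms by a subexponential multiple of $c_{n/2}$. If anything your tail estimate is slightly more careful: the paper bounds the number of non-unit divisor contributions by $\log_2(n)$, which is not actually an upper bound on the divisor count $\tau(n)$, whereas your $\tau(n)=O(n^{\varepsilon})$ is correct --- in either case the factor is absorbed by the exponential growth of $c_n$, so the conclusion stands.
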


\begin{proof}
As shown by Corteel and Hitczenko \cite{CH07}, the asymptotics in \eqref{eq:dasymp} hold also for $(c_n)_{n=1}^{\infty}$. It suffices 
therefore to show that $c_n\sim d_n$. Note that
\begin{align*}
c_n-\log_2(n)c_{n/2} \leq \sum_{r|n}\mu(r)c_{n/r} = d_n \leq c_n,
\end{align*}
i.e.
\begin{align*}
 1-\frac{\log_2 (n) c_{n/2}}{c_n} \leq \frac{d_n}{c_n}\leq 1.
\end{align*} 
Since $(c_n)_{n=1}^{\infty}$ grows exponentially, the left hand side tends to $1$ as $n\to \infty$. This completes the proof.
\end{proof}
Approximately we have $\rho=0.406148005001\dots $, and so $  d_n \sim (0.481225\dots )(2.462156\dots )^n$.

To arrive at an asymptotic formula for $(q_n)_{n=1}^{\infty}$, we choose to analyze the more tractable $(p_n)_{n=1}^{\infty}$.
However, the asymptotics of $(p_n)_{n=1}^{\infty}$ do, to our knowledge, not exist in the literature, though they have been discussed 
on \url{mathoverflow.net} \cite{Ma}, with user lucia outlining the correct analysis. For completeness, we outline a version 
of lucia's argument in Section \ref{pnassymptotics} (and correct some incorrect terms in that answer).
There it is shown that $(p_n)_{n=1}^{\infty}$ grows superlinearly which implies that its M\"{o}bius inversion $(q_n)_{n=1}^{\infty}$ 
must grow asymptotically at the same rate, i.e.\ $q_n\sim p_n$. The proof of this is identical to that of Lemma \ref{lem:dasymp}. 
\begin{proposition}
Let $S_m$ be as in $\eqref{eq:S_m}$ below. As $n\to \infty$, 
\begin{align*}
 q_n \sim \frac{1}{\sqrt{2\pi S_1(\omega)}}\exp\left(S_{-1}(\omega)+\frac{n}{\omega} \right),
\end{align*}
where $\omega=\omega(n)\in (0,1)$ is the unique solution to $S_0(\omega)=n$.
\end{proposition}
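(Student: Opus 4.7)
The plan is to first establish the stated asymptotic for $p_n$ via a saddle-point argument, and then transfer it to $q_n$ by the same Möbius-inversion trick used in the proof of Proposition~\ref{lem:dasymp}.

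The generating function of $p_n$ factorizes as
\begin{equation*}
 P(z) := \sum_{n\geq 0} p_n z^n = \prod_{v,w\geq 1}\frac{1}{1-z^{vw}} = \prod_{m\geq 1}\frac{1}{(1-z^m)^{d(m)}},
\end{equation*}
since the number of ordered pairs $(v,w)\in \Z_{>0}^2$ with $vw=m$ equals the divisor count $d(m)$. Writing $F(t):=\log P(e^{-t})$ and substituting $t=1/\omega$, I would identify $S_{-1}(\omega)$, $S_0(\omega)$ and $S_1(\omega)$ in \eqref{eq:S_m} with $F(1/\omega)$, $-F'(1/\omega)$ and $F''(1/\omega)$, i.e.\ with the log-generating function, the mean, and the variance of the tilted distribution with weights $p_n r^n/P(r)$ at $r:=e^{-1/\omega}$. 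The equation $S_0(\omega)=n$ then selects $\omega=\omega(n)$ as the tilting that makes the mean equal to $n$. From Cauchy's formula
\begin{equation*}
 p_n = \frac{1}{2\pi}\int_{-\pi}^{\pi} P(re^{i\theta})\,r^{-n}\,e^{-in\theta}\,d\theta,
\end{equation*}
a Gaussian expansion on a short arc $|\theta|\leq \theta_0$ together with an off-arc bound of the form $|P(re^{i\theta})/P(r)|\leq \exp(-cS_1(\omega)\theta^2)$ yields $p_n\sim P(r)r^{-n}/\sqrt{2\pi S_1(\omega)}$, which rearranges to the claimed formula since $\log P(r)=S_{-1}(\omega)$ and $-n\log r = n/\omega$.

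To pass from $p_n$ to $q_n$, I would invoke the Möbius inversion $q_n=\sum_{r\mid n}\mu(r)p_{n/r}$ established earlier to obtain the sandwich $p_n - \lfloor\log_2 n\rfloor p_{n/2} \leq q_n \leq p_n$. The asymptotic just obtained for $p_n$ shows that $p_n$ grows superpolynomially, so $(\log_2 n)\,p_{n/2}/p_n\to 0$, and hence $q_n\sim p_n$.

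The main obstacle is the saddle-point step. Although such analyses are classical for products $\prod_m(1-z^m)^{-a_m}$, the relevant Dirichlet series here is $\sum_{m\geq 1}d(m)m^{-s}=\zeta(s)^2$, which has a \emph{double} pole at $s=1$, so Meinardus's theorem does not apply off the shelf. One has to retain the logarithmic correction terms when estimating $F(1/\omega)$ and its first two derivatives as $\omega\to\infty$ via Mellin inversion, and verify the off-arc minorization uniformly over $\theta\in[\theta_0,\pi]$. This is precisely the technical content attributed to lucia in \cite{Ma} and carried out in full in Section~\ref{pnassymptotics}.
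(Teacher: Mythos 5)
Your proposal is correct and follows the same route as the paper: prove the asymptotic for $p_n$ via the saddle-point method (the paper packages this as H-admissibility, citing \cite[Theorem VIII.4]{FR09}, and your identification $S_{-1}(\omega)=\log P(e^{-1/\omega})$, $S_0(\omega)=-\tfrac{d}{dt}\log P(e^{-t})\big|_{t=1/\omega}$, $S_1(\omega)=\tfrac{d^2}{dt^2}\log P(e^{-t})\big|_{t=1/\omega}$ is exactly the structure underlying the paper's Mellin computation and the conditions (i)--(iii) in Theorem \ref{thm:pasymptotics}), then deduce $q_n\sim p_n$ by the same Möbius-inversion sandwich used for Lemma \ref{lem:dasymp}. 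One small caveat, which is also present in the paper's proof of Lemma \ref{lem:dasymp}: the coefficient $\lfloor\log_2 n\rfloor$ in $p_n-\lfloor\log_2 n\rfloor\,p_{n/2}\leq q_n$ undercounts the number of squarefree divisors $r\geq 2$ of $n$; replacing it by $d(n)=O(n^{\varepsilon})$ still yields $q_n/p_n\to 1$ because $p_n/p_{n/2}$ grows superpolynomially, so the conclusion is unaffected.
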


\subsubsection{Asymptotics of generalized partitions}
\label{pnassymptotics}

We turn to the asymptotics of the sequence $(p_n)_{n=1}^{\infty}$. The generating function for generalized partitions is 
\begin{align*}
 P(z)=\sum_{n\geq 1}p_n z^n=\prod_{n=1}^{\infty}(1-z^n)^{-d(n)},
\end{align*}
where $d(n)$ is the number of divisors of $n$. Such a generating function is precisely of the form covered in Meinardus theorem \cite{FR09};
however, the corresponding Dirichlet series $\sum_{n=1}^{\infty} d(n)n^{-s} \zeta^2(s)$ has a \emph{double pole} at $s=1$, implying that Meinardus's theorem cannot be applied in this case. Results by Brigham 
\cite{B50} imply that $\log p_n\sim \pi \sqrt{\frac{n \log n}{3}}$. However, this does not imply precise asymptotics for $(p_n)_{n=1}^{\infty}$. Our argument is via the saddle-point method; in particular
we make use of more general results, which only require us to verify that $P$ satisfies certain 
conditions.

For our analysis, the series
\begin{align*}
S_m(\omega)
&:=\sum_{k \geq 1}\sum_{\ell | k} \ell d(\ell)k^{m} e^{-k/\omega}
\end{align*}
for $m\geq -1$ are important. Note that each $S_m(\omega)$ is absolutely convergent for any $\omega \in (0,\infty)$. Series of this form are amenable to analysis using Mellin transforms. 
By using convolution properties and the Mellin inversion theorem, one finds that 
\begin{align*}
S_m(\omega) = \frac{1}{2\pi i }\int_{m+3-i\infty}^{m+3+i\infty} \omega^s \Gamma(s)\zeta(s-m) \zeta(s-m-1)^2 ds.
\end{align*}
The function $\Gamma(s)$ has simple poles at $0,-1,-2,\dots $ and $\zeta(s)$ has a simple pole at $1$ and trivial zeros at $-2,-4,-6,\dots $ (which will cancel most or all poles of 
$\Gamma(s)$). Consider the contour integral of the same integrand over the rectangular contour with corners $-A-Ri$, $-A+Ri$, $m+3+Ri$, $m+3-Ri$, where $A > 1$ can be chosen arbitrarily.
This contour encloses all poles and can be evaluated using the Cauchy residue theorem. Now let $R\to \infty$ to obtain that $S_m(\omega)$
can be approximated well by the sum of the residues. We obtain, for any $A > 1$,
\begin{align}
\frac{S_m(\omega)}{(m+1)!} =  
\begin{cases}
 \zeta(2)\omega\left( \log \omega  + \gamma + \frac{\zeta'(2)}{\zeta(2)} \right) + \frac{\log \omega}{4} + \frac{\log 2\pi}{4} - \frac{1}{288 \omega} + O(\omega^{-A}) & \text{ if } m=-1, \\
 \zeta(2)\omega^2\left( \log \omega + 1 + \gamma + \frac{\zeta'(2)}{\zeta(2)} \right) + \frac{\omega}{4}+ \frac{1}{288} + O(\omega^{-A}) & \text{ if } m=0, \\
  \zeta(2)\omega^{m+2}\left(\log \omega + H(m+1) + \gamma  + \frac{\zeta'(2)}{\zeta(2)}\right) +  \frac{\omega^{m+1}}{4(m+1)} + O(\omega^{-A}) &   \text{ if } m\geq 1,
 \end{cases} \label{eq:S_m}
\end{align}
where $H(m):=\sum_{k=1}^{m} k^{-1}$, and $f(\omega)=O(g(\omega))$ means that $\limsup_{\omega \to \infty} f(\omega)/g(\omega) < \infty$.
For $|\theta| < \pi$ we have the expansion 
\begin{align*}
\log P(e^{i\theta-1/\omega})-\log P(e^{-1/\omega}) 
&= \sum_{k\geq 1}\sum_{\ell | k}\ell d(\ell)e^{-k/\omega} \left(\frac{e^{ik\theta}-1}{k} \right) \\
& = \sum_{k\geq 1}\sum_{\ell | k}\ell d(\ell)e^{-k/\omega} \sum_{m\geq 1}\frac{(i\theta)^m k^{m-1}}{m!} \\
& = \sum_{m\geq 1} \frac{(i \theta)^m}{m!}\sum_{k\geq 1}\sum_{\ell | k}\ell d(\ell)k^{m-1}e^{-k/\omega} \\
& = \sum_{m\geq 1} \frac{(i\theta)^m}{m!}S_{m-1}(\omega).
\end{align*}
Note also that $\log P(e^{-1/\omega}) =S_{-1}(\omega)$. 

The asymptotic growth rate of the sequence $(p_n)_{n=1}^{\infty}$ can be expressed in terms of $n$ and the functions $S_{-1},S_0$ and $S_1$ as follows.

\begin{theorem}\label{thm:pasymptotics}
As $n\to \infty$,
\begin{align*}
p_n\sim \frac{1}{\sqrt{2\pi S_1(\omega)}} \exp \left(S_{-1}(\omega)+\frac{n}{\omega} \right)
\end{align*}
where $\omega=\omega(n)$ is the unique solution in $(0,\infty)$ to $S_0(\omega)=n$.
\end{theorem}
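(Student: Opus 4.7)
The plan is to apply the saddle-point method to Cauchy's integral formula
\[
p_n=\frac{1}{2\pi i}\oint \frac{P(z)}{z^{n+1}}\,dz,
\]
taken around the circle $|z|=e^{-1/\omega}$, and then use the expansion of $\log P(e^{i\theta-1/\omega})$ already derived in the excerpt. After parametrising $z=e^{i\theta-1/\omega}$, the integrand becomes
\[
\exp\!\Bigl(S_{-1}(\omega)+\tfrac{n}{\omega}-in\theta+\sum_{m\ge 1}\tfrac{(i\theta)^m}{m!}S_{m-1}(\omega)\Bigr).
\]
The saddle-point equation is obtained by forcing the linear-in-$\theta$ term to vanish, i.e.\ $S_0(\omega)=n$. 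From the series definition one sees that $S_0$ is continuous, strictly increasing on $(0,\infty)$ with range $(0,\infty)$, which pins down a unique $\omega=\omega(n)$, and from \eqref{eq:S_m} one has $\omega(n)\to\infty$ as $n\to\infty$.

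Next I would split the integral into a central arc $|\theta|\le\theta_0$ and its complement, choosing $\theta_0$ so that $\theta_0^2 S_1(\omega)\to\infty$ while $\theta_0^3 S_2(\omega)\to 0$. The asymptotics in \eqref{eq:S_m} give $S_m(\omega)\asymp \omega^{m+2}\log\omega$ for $m\ge 1$, which in particular yields $S_2(\omega)/S_1(\omega)^{3/2}\to 0$; this ``Gaussian regime'' condition guarantees that a suitable $\theta_0$ (e.g.\ a small fractional power of $\omega^{-1}$) exists. On the central arc, every term of order $\theta^3$ or higher in the exponent is uniformly $o(1)$, and the resulting Gaussian integral evaluates to
\[
\frac{1}{2\pi}\int_{-\theta_0}^{\theta_0}e^{-\theta^2 S_1(\omega)/2}\,d\theta\cdot(1+o(1))=\frac{1}{\sqrt{2\pi S_1(\omega)}}(1+o(1)),
\]
which, combined with the constant part $\exp(S_{-1}(\omega)+n/\omega)$ of the exponent, produces the claimed main term.

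The hard part is the tail bound: one must show that
\[
\bigl|P(e^{i\theta-1/\omega})\bigr|\le e^{-\eta(\omega)}P(e^{-1/\omega})\qquad(\theta_0\le|\theta|\le\pi)
\]
with $\eta(\omega)$ growing fast enough to kill both the $2\pi$ length of the tail and the main factor $\exp(S_{-1}(\omega)+n/\omega)$ divided by $\sqrt{S_1(\omega)}$. I would derive this from the identity
\[
\log\bigl|P(e^{i\theta-1/\omega})\bigr|-\log P(e^{-1/\omega})=-\sum_{n\ge 1}d(n)\log\Bigl|\tfrac{1-e^{in\theta-n/\omega}}{1-e^{-n/\omega}}\Bigr|,
\]
combined with $|1-e^{in\theta-n/\omega}|^2=(1-e^{-n/\omega})^2+4e^{-n/\omega}\sin^2(n\theta/2)$: the decay is driven by those small $n\le \omega$ for which $n\theta$ is bounded away from $0\pmod{2\pi}$, and a careful Farey-type partition of the minor arcs delivers the required bound. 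This is the only place where the precise shape of $d(n)$ (ultimately the double pole of $\zeta(s)^2$ at $s=1$) enters; once it is in hand, either a direct conclusion or an invocation of a general saddle-point theorem as in \cite{FR09} completes the argument.
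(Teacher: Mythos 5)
Your proposal is the saddle-point method in unpacked form, and the constraints you place on the half-width $\theta_0$ of the central arc ($\theta_0^2 S_1(\omega)\to\infty$, $\theta_0^3 S_2(\omega)\to 0$, forcing $\theta_0=\omega^{-a}$ with $4/3<a<3/2$) together with your tail estimate are precisely the three H-admissibility conditions that the paper writes down and then checks before invoking \cite[Thm.\ VIII.4]{FR09}; so the underlying argument is the same, the paper packaging it as a black-box criterion while you carry out the mechanics explicitly. The one place I would push back is the tail bound: a Farey-type (circle-method) decomposition of the minor arcs is more than is needed. From the identity you write, the required decay is controlled by $\sum_{n\le \omega} d(n)\log\bigl(1+\tfrac{4e^{-n/\omega}\sin^2(n\theta/2)}{(1-e^{-n/\omega})^2}\bigr)$. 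Already at $|\theta|=\theta_0=\omega^{-a}$, using $\sum_{n\le\omega}d(n)\sim\omega\log\omega$ and $\sin^2(n\theta/2)\asymp n^2\theta^2$ for $n\le\omega$ (here $\theta_0\omega\to 0$ since $a>1$), one gets a decay of order $\exp(-c\,\omega^{3-2a}\log\omega)$, which overwhelms the polynomial factor $\sqrt{S_1(\omega)}\asymp\omega^{3/2}(\log\omega)^{1/2}$; and for larger $|\theta|$, including those near rationals $2\pi p/q$ with small $q$, a positive proportion of the $n\le\omega$ are not multiples of $q$, so the sum is only larger, uniformly. Hence a single crude estimate (no arc partitioning) suffices, which is presumably what the paper's omitted "readily checked" step amounts to. Apart from that extra elaboration, the proposal is correct and matches the paper's approach.
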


\begin{proof}
The result can be deduced from the more general result \cite[Theorem VIII.4]{FR09}, provided that the function $P$ is \emph{H}--admissible, which in 
this case is equivalent to verifying that the following conditions are satisfied.
\begin{enumerate}[(i)]
\item $S_{0}(\omega)\to \infty$ and $S_1(\omega)\to \infty$ as $\omega \to \infty$.
\item There exists a function $\theta_0:(0,\infty)\to (0,\pi)$ such that 
\begin{align*}
\sum_{m\geq 3} \frac{(i\theta)^m}{m!} S_{m-1}(\omega)\to 0
\end{align*}
as $\omega \to \infty$, uniformly in $0<|\theta|<\theta_0(\omega)$.
\item For the same function $\theta_0$,
\begin{align*}
\Re\left(\frac{1}{2}\log S_1(\omega) + \sum_{m\geq 1}\frac{(i\theta)^m}{m!}S_{m-1}(\omega)\right) \to -\infty
\end{align*}
as $\omega\to \infty$, uniformly in $\theta_0(\omega)<|\theta|<\pi$. 
\end{enumerate}
Using \eqref{eq:S_m}, the first condition is obviously true, and the last two can be readily checked to hold true for $\theta_0(\omega)=\omega^{-a}$, for any $4/3<a<3/2$. Further details are omitted.
\end{proof}

\begin{corollary}
As $n\to \infty$,
\begin{align*}
p_n \sim \frac{1}{\omega^{1/4}(2n)^{1/2}} \exp\left(\frac{2n}{\omega}-\omega\left(\zeta(2)-\frac{1}{4}\right)-\frac{1}{288} \right)
\end{align*}
where $\omega=\omega(n)$ is the unique solution in $(0,\infty)$ to $S_0(\omega)=n$.
Furthermore,
\begin{align*}
\omega\sim \frac{2}{\pi} \sqrt{\frac{3n}{ \log n}}
\end{align*}
as $n\to \infty$, which implies
\begin{align*}
\log p_n \sim \pi \sqrt{\frac{n \log n}{3}}.
\end{align*}
\end{corollary}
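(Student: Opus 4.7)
The plan is to derive the first assertion from Theorem~\ref{thm:pasymptotics} by substituting the explicit expansions \eqref{eq:S_m} for $m\in\{-1,0,1\}$ and exploiting the defining equation $n=S_0(\omega)$. Writing $c:=\gamma+\zeta'(2)/\zeta(2)$, I would first plug in the $m=-1$ and $m=0$ expansions to compute $S_{-1}(\omega)+n/\omega = S_{-1}(\omega)+S_0(\omega)/\omega$. The leading $\zeta(2)\omega\log\omega$ contributions from the two summands double up, and can be re-expressed as $2n/\omega$ (via the $S_0$ expansion used in reverse). The sub-leading corrections linear in $\omega$ then collapse to $-\omega(\zeta(2)-\tfrac{1}{4})$, while the $1/\omega$ terms cancel against each other up to $-\tfrac{1}{288}$. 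A residual $\tfrac{1}{4}\log\omega+$const contribution survives, to be absorbed into the prefactor in the next step.

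For the prefactor $(2\pi S_1(\omega))^{-1/2}$, I would compare the $m=1$ expansion with $2n\omega=2\zeta(2)\omega^3(\log\omega+1+c)+O(\omega^2)$; these agree to leading order, so $S_1(\omega)\sim 2n\omega$ and hence $(2\pi S_1(\omega))^{-1/2}\sim(4\pi n\omega)^{-1/2}$. Combining this with the residual $\tfrac{1}{4}\log\omega$ identified above converts the prefactor, up to absolute constants, into $\omega^{-1/4}(2n)^{-1/2}$, completing the first part of the corollary.

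For the stated asymptotic of $\omega$, I would invert $S_0(\omega)=n$ iteratively. Since $n\sim\zeta(2)\omega^2\log\omega$ by \eqref{eq:S_m}, taking logarithms yields $\log n=2\log\omega+\log\log\omega+O(1)$, so $\log\omega\sim\tfrac{1}{2}\log n$. Substituting back into $\omega\sim\sqrt{n/(\zeta(2)\log\omega)}$ then gives $\omega\sim\sqrt{2n/(\zeta(2)\log n)}$, and the identity $\zeta(2)=\pi^2/6$ produces the claimed constant $\tfrac{2}{\pi}\sqrt{3}$. Finally, since $2n/\omega$ dominates the exponent in the first asymptotic, substituting the just-derived expression for $\omega$ gives $\log p_n\sim 2n/\omega\sim\pi\sqrt{n\log n/3}$, as claimed.

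The main obstacle will be the careful bookkeeping of sub-leading terms and absolute constants across \eqref{eq:S_m}: several cancellations between $S_{-1}(\omega)$ and $S_0(\omega)/\omega$ (in both the $1/\omega$ and the constant columns) must occur exactly, and the residual $\tfrac{1}{4}\log\omega$ must combine cleanly with $(2\pi S_1(\omega))^{-1/2}$ to produce the precise $\omega^{-1/4}(2n)^{-1/2}$ prefactor. Particular care is required because the prefactor is only logarithmic in $\omega$, so any stray polynomial-in-$\omega$ term would ruin the stated asymptotic.
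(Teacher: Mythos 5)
Your overall strategy is the right one: substitute the expansions \eqref{eq:S_m} into Theorem~\ref{thm:pasymptotics}, using the defining relation $n=S_0(\omega)$, and then invert $S_0(\omega)=n$ iteratively. Your treatment of the last two assertions is sound — since $\sqrt{n/\log n}=o(n/\omega)$, the exponent is dominated by $2n/\omega$, and the iterative inversion $\log\omega\sim\frac12\log n$, $\omega\sim\sqrt{n/(\zeta(2)\log\omega)}$ does give $\omega\sim\frac{2}{\pi}\sqrt{3n/\log n}$ and hence $\log p_n\sim\pi\sqrt{n\log n/3}$.

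The first assertion, however, is where you hand-wave exactly the step that needs verification, and the cancellations you assert do \emph{not} follow from \eqref{eq:S_m} as stated. Writing $c=\gamma+\zeta'(2)/\zeta(2)$ and substituting, one finds
\begin{align*}
S_{-1}(\omega)+\frac{n}{\omega}
&= 2\zeta(2)\omega\log\omega + \zeta(2)\omega(1+2c) + \tfrac{1}{4}\log\omega + \tfrac{1}{4}\log 2\pi + \tfrac14 + O(\omega^{-A}),
\end{align*}
since the $-\frac{1}{288\omega}$ term from $S_{-1}$ cancels \emph{exactly} against the $+\frac{1}{288\omega}$ coming from $S_0(\omega)/\omega$ (not ``up to $-\frac{1}{288}$'' as you claim), and no $O(1)$ or $1/\omega$ remainder survives. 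Subtracting $\frac{2n}{\omega}=2\zeta(2)\omega(\log\omega+1+c)+\tfrac12+\tfrac{1}{144\omega}+O(\omega^{-A-1})$, the linear-in-$\omega$ residual is $-\zeta(2)\omega$, \emph{not} $-\omega(\zeta(2)-\tfrac14)$. There is no mechanism in your computation producing the missing $+\omega/4$, so the claimed exponent differs from $S_{-1}(\omega)+n/\omega$ by $\tfrac{\omega}{4}+O(\log\omega)$, i.e.\ by a factor $e^{\omega/4+O(\log\omega)}$ that does not tend to $1$. Similarly, the prefactor calculation cannot be left ``up to absolute constants'': $\sqrt{2\pi S_1(\omega)}\sim\sqrt{4\pi n\omega}=(2\pi)^{1/2}(2n)^{1/2}\omega^{1/2}$, and combining with the $\tfrac14\log\omega$ from the exponent leaves a dangling $(2\pi)^{-1/2}$ (and further stray constants $e^{\frac14\log 2\pi-\frac14}$); since the corollary asserts $\sim$, these must demonstrably combine to $1$, not be waved off. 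In short, you correctly identify the bookkeeping as the main obstacle, but your proposal does not clear it: the terms as given in \eqref{eq:S_m} simply do not collapse to the stated formula, so either \eqref{eq:S_m} needs re-derivation (the residue at $s=-1$ and the constant term at $s=0$ in the $m=-1$ case both look suspect) or the constants in the corollary do, and the proposal as written verifies neither.
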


\section{Acknowledgments.}  
The second author wants to thank his supervisor Volodymyr Mazorchuk for proposing 
the problem, many very helpful discussions and for double-checking the counting. Moreover, he would like to thank
Konstantinos Tsougkas for discussions concerning the proof of Lemma \ref{infChain}, Andrea
Pasquali for discussions regarding Section \ref{generalCase} and Daniel Tubbenhauer for discussions about Theorem \ref{GenPoly}.

\bigskip
\hrule
\bigskip

\noindent 2010 {\it Mathematics Subject Classification}: Primary 15B36; Secondary 05A15, 05A16.

\noindent \emph{Keywords: } generalized compositions, generalized partitions, quasi-idempotent matrices, positive integral matrices.

\bigskip
\hrule
\bigskip

\noindent (Concerned with sequences
\seqnum{A006171}
\seqnum{A129921},
\seqnum{A280782}, and
\seqnum{A280783}.)

\bigskip
\hrule
\bigskip

\end{document}